\title[Computational Impact in Stochastic LQR]{Analyzing the Impact of Computation in Adaptive Dynamic Programming for Stochastic LQR Problem}
\author{%
 \Name{Wenhan Cao}
 \Email{cwh19@mails.tsinghua.edu.cn}
 \\
  \addr
 School of Vehicle and Mobility, Tsinghua University, China
 \\
  \addr 
  Department of Computer Science, 
 The University of Manchester, UK
\AND
 \Name{Alexandre Capone} \Email{alexandre.capone@tum.de}\\
 \addr Chair of Information-oriented Control, Technical University of Munich, Germany
\AND
 \Name{Sandra Hirche} \Email{hirche@tum.de}\\
 \addr Chair of Information-oriented Control, Technical University of Munich, Germany
 \AND
 \Name{Wei Pan} \Email{wei.pan@manchester.ac.uk}\\
 \addr Department of Computer Science, 
 The University of Manchester, UK
}
\begin{document}

\maketitle

\begin{abstract}%
Adaptive dynamic programming (ADP) for stochastic linear quadratic regulation (LQR) demands the precise computation of stochastic integrals during policy iteration (PI). In a fully model-free problem setting, this computation can only be approximated by state samples collected at discrete time points using computational methods such as the canonical Euler-Maruyama method. Our research reveals a critical phenomenon: the sampling period can significantly impact control performance. This impact is due to the fact that computational errors introduced in each step of PI can significantly affect the algorithm’s convergence behavior, which in turn influences the resulting control policy. We draw a parallel between PI and Newton’s method applied to the Ricatti equation to elucidate how the computation impacts control. In this light, the computational error in each PI step manifests itself as an extra error term in each step of Newton’s method, with its upper bound proportional to the computational error. Furthermore, we demonstrate that the convergence rate for ADP in stochastic LQR problems using the Euler-Maruyama method is $O(h)$, with $h$ being the sampling period. A sensorimotor control task finally validates these theoretical findings.
\end{abstract}

\begin{keywords}%
linear quadratic regulator, adaptive dynamic programming, stochastic differential equation, computational error
\end{keywords}

\section{Introduction}\label{sec.intro}
In recent years, discrete-time reinforcement learning (RL) has proven effective in various domains, such as games \citep{silver2016mastering}, 
large language models \citep{OpenAI2023GPT4TR}, etc. Despite these advances, most systems, whether in natural sciences like physics \citep{einstein1905motion} and biology \citep{szekely2014stochastic}, in social sciences such as finance \citep{wang2020continuous} and psychology \citep{oravecz2011hierarchical}, or in engineering fields including robotics \citep{stager2016stochastic} and power systems \citep{milano2013systematic}, operate continuously in time and are inherently stochastic, governed by stochastic differential equations (SDEs). The complexity and continuous nature of these systems underscore the growing importance and necessity of using stochastic continuous-time RL approach, e.g., adaptive dynamic programming (ADP) algorithms \citep{jiang2011approximate, jiang2014adaptive, bian2016adaptive, bian2018stochastic, bian2019continuous, wei2023continuous}.

In practical applications, the model is often unknown, and accurately modeling and identifying SDEs is a complex task \citep{allen2007modeling,gray2011stochastic,browning2020identifiability}. For cases with \emph{unknown} system dynamics, policy iteration (PI)-based methods have been proposed for stochastic linear quadratic regulator (LQR) problems that involve state and control-dependent noise \citep{jiang2011approximate, jiang2014adaptive, bian2016adaptive, bian2018stochastic, bian2019continuous}. A significant challenge with this method is that each PI step involves the computation of a stochastic integral, as detailed in Equation (6) in \cite{bian2016adaptive} and also in \eqref{eq.PIa}. When the system is fully model-free, finding an exact analytical solution for this stochastic integral is not feasible due to the unknown analytical form of the integrand. For example, the analytical form of the integrand \(x^{\top}\otimes x^{\top}\) in the integral \(\int_t^{t+\delta_t} x^{\top}\otimes x^{\top} \rd s\) from \eqref{eq.PIa} is inaccessible since we cannot acquire the analytical form of the system state \(x\).

An alternative approach is to approximate the integral with discrete-time state samples using numerical methods, such as the canonical Euler-Maruyama method \citep{kloeden1992stochastic}. 
Consider an autonomous driving car equipped with various sensors such as LIDAR, radar, and cameras, which collect data with a sampling period of $h$.  By gathering these samples over a time span of $\delta_t = N_s h$, where $N_s$ is sample size, we can approximate the integral $\int_t^{t+\delta_t}x^{\top}\otimes x^{\top} \rd s$ in \eqref{eq.PIa} using the sensor data at points $x(t)$, $x(t+h)$, ..., $x(t+(N_s-1)h))$, see $\int_t^{t+\delta_t} x^{\top}\otimes x^{\top} \rd s \approx \sum_{i=0}^{N_s-1} x^{\top}(t+ih)\otimes x^{\top}(t+ih) \cdot h
$. In this scenario, the sampling period $h$ essentially serves as the time step in the Euler-Maruyama method to approximate the integral.

\begin{wrapfigure}{h}{0.49\textwidth}
    \centering
    \vspace{-0.5cm}
    \includegraphics[width=0.5\textwidth]{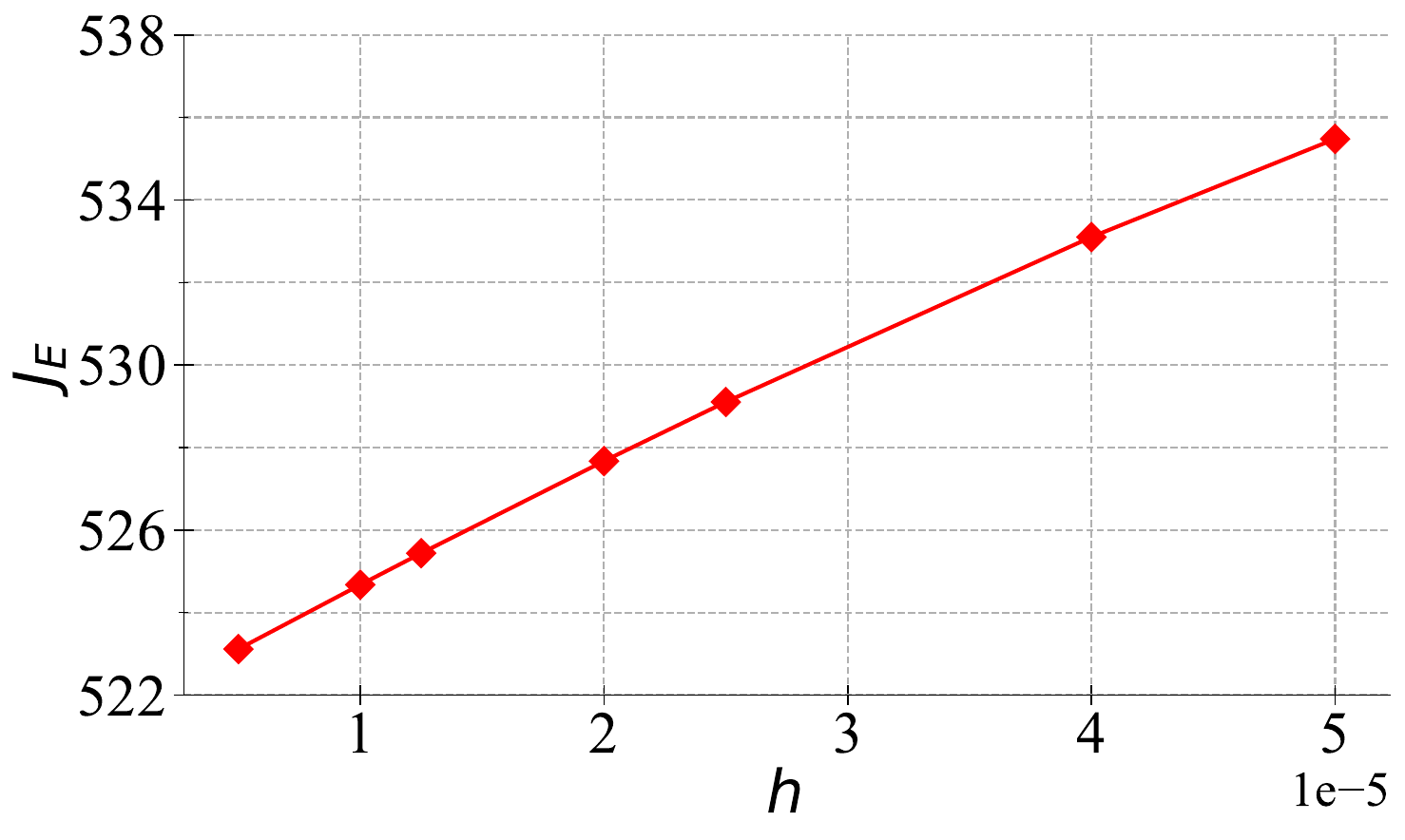}
    \vspace{-0.8cm}
    \caption{Expected cost $J_E$ for controllers, computed by the Euler-Maruyama method with varying sample period $h$ for a sensorimotor control task.}
    \vspace{-0.3cm}
    \label{fig.control performance}
\end{wrapfigure}

However, when applying the model-free ADP method to simulate a sensorimotor control task, as described in Section V of \cite{bian2016adaptive}, we observe that the sample period \( h \) in each interval can significantly impact the system's performance. As illustrated in Figure \ref{fig.control performance}, we find that as \( h \) decreases, the expected cost $J_E = \mathbb{E}\left\{\int_{0}^{\infty} \left(x^{\top}Qx + u^{\top}Ru \right) \rd t \right\}$ decreases significantly.
We analyze that the phenomenon ``computation impacts control'' can be attributed to the fact that a large sample period $h$ inevitably introduces computational errors when we approximate the stochastic integrals \eqref{eq.PIa} using samples collected at discrete time intervals. These computational errors can then lead to errors in each step of PI, accumulating throughout the entire PI process. Ultimately, this affects the convergence of ADP and consequently the control performance.
To understand how computation influences control in ADP for stochastic LQR problems, we first examine the solution error in each step of PI due to computational errors and then quantify the cumulative impact throughout the PI process. Our contributions can be summarized as follows:
\begin{itemize}
    \item We demonstrate that each step of PI necessitates the computation of stochastic integrals, which can only be approximated using samples collected at discrete time points. This approximation process inevitably introduces computational errors in each PI step. Taking advantage of the Euler-Maruyama method's convergence properties and analyzing the matrix equation's structure, we show that the solution error in each step of PI is proportional to the sampling period \( h \), in a system with a fixed sampling period.

    \item We prove that the PI proposed in \cite{jiang2011approximate, jiang2014adaptive, bian2016adaptive} can be interpreted as Newton's method for solving the generalized Riccati equation. When impacted by computational errors, the PI process resembles Newton's iteration with an additional error term. By leveraging the convergence properties of fixed-point iterations and assuming common Lipschitz conditions, we establish that the local convergence rate for ADP in stochastic LQR problems, in the presence of computational errors, is \( O(h) \).
\end{itemize}

The remainder of this paper is organized as follows:
In Section \ref{sec.problem setting}, we formulate the problem. Then, Section \ref{sec.computational error analysis} analyzes the computational error in the solution in each step of PI. Subsequently, Section \ref{sec.convergence analysis} provides a detailed convergence analysis. Finally, we present the numerical results in Section \ref{sec.results} and conclude our paper in Section \ref{sec.conclusion}.

\textbf{Notations:} \(\mathbb{R}^{m \times n}\) signifies the set of $m \times n$ real matrices. The symbol \(\|\cdot\|\) specifically refers to the Euclidean norm when applied to vectors. The Frobenius norm for matrices is denoted by \(\|\cdot\|_F\). We use \(\mathrm{vec}(\cdot)\) to indicate the vectorization of a matrix, transforming it into a column vector. The Kronecker product operation is represented by \(\otimes\). For matrix inequalities, \(P \succ 0\) (\(P \succeq 0\)) means that matrix \(P\) is positive (semi-)definite. The notation \(|\cdot|\) is used for element-wise absolute value when applied to matrices or vectors.

\section{Problem Formulation}\label{sec.problem setting}

Consider the linear system governed by the following SDE:

\begin{equation}\label{eq.SDE}
\rd x = (Ax + Bu)\rd t + B\rd w,    
\end{equation}
with 
\begin{equation}\label{eq.SDE noise}
\rd w = \sum_{i=1}^{q_1} F_i x \rd w_{1i} + \sum_{i=1}^{q_2} G_i u \rd w_{2i}. 
\end{equation}
Here, $\begin{bmatrix}
w_1, & w_2    
\end{bmatrix}^{\top} \in \mathbb{R}^{q_1+q_2}$ is the $\left\{\mathcal{F}_t \right\}$-adapted standard Brownian motion, where $\mathcal{F}_t$ is the $\sigma$-field generated by $\begin{bmatrix}
w_1, & w_2    
\end{bmatrix}^{\top}$. Since accurately modeling and identifying the SDE as \eqref{eq.SDE} is a complex task \citep{allen2007modeling,gray2011stochastic,browning2020identifiability}, we consider a \emph{model-free} problem setting, which means both $A \in \mathbb{R}^{n \times n}$, $B \in \mathbb{R}^{n \times m}$, $F_i \in \mathbb{R}^{m \times n}$ and $G_i \in \mathbb{R}^{m \times m}$ are \emph{unknown}. Given $x(0) = x_0 \in \mathbb{R}^n$, we consider the quadratic cost in LQR defined by

\begin{equation}\nonumber
J(x_0; u) = \mathbb{E}\left\{\int_{0}^{\infty} \left(x^{\top}Qx + u^{\top}Ru \right) \rd t | x_0 \right\},    
\end{equation}
where $Q \succeq 0$ and $R \succ 0$. Suppose $\begin{bmatrix}
A & BF_1 & BF_2 & ... & BF_{q_1} | \sqrt{Q}  
\end{bmatrix}$ is  observable \citep{jiang2011approximate,jiang2014adaptive,bian2016adaptive,bian2018stochastic,bian2019continuous}, the objective is to find the controller $u$ which minimizes $J(x_0, u)$.

\begin{definition}
[Admissible matrix]
A matrix $K \in \mathbb{R}^{m \times n}$ is called admissible if the system described by \eqref{eq.SDE} and \eqref{eq.SDE noise} with control policy $u = -Kx$ is globally asymptotically stable (GAS) at the origin in the mean square sense \citep{willems1976feedback} and $J(x_0, u) < \infty, \forall x_0 \in \mathbb{R}^n$.  
\end{definition}

\begin{assumption}[Admissible initial matrix]
The system described by \eqref{eq.SDE} and \eqref{eq.SDE noise} is mean square stabilizable with a known admissible $K_0$ \citep{jiang2011approximate,jiang2014adaptive,bian2016adaptive,bian2018stochastic,bian2019continuous}.  
\end{assumption}

It is shown in \cite{mclane1971optimal} that $J(x_0; u)$ is optimized under the optimal controller $u^* = -K^* x$, where $K^* = \left(\Sigma_{P^*} + R\right)^{-1}B^{\top}P^*$ and $P^* \in \mathbb{R}^{n \times n}$ is the unique symmetric positive definite solution to the following generalized algebraic Riccati equation

\begin{subequations}\label{eqs.ricatti}
\begin{equation}\label{eq.ricatti}
\begin{aligned}
\mathcal{T}_P = 0,
\end{aligned}
\end{equation}
with
\begin{equation}\label{eq.ricatti definition}
\begin{aligned}
\mathcal{T}_P &:= A^{\top}P + PA - PB(\Sigma_{P} + R)^{-1}B^{\top}P + \Pi_P + Q,
\\
\Pi_P &:= \Sigma_{i=1}^{q_1} F_i^{\top}B^{\top}PBF_i, \;\quad  \Sigma_{P} := \Sigma_{i=1}^{q_2} G_i^{\top}B^{\top}PBG_i.    
\end{aligned}
\end{equation}    
\end{subequations}

In situations where the system models are \emph{fully unknown}, the data-driven PI approach can be utilized to solve the Riccati equation \citep{jiang2011approximate, jiang2014adaptive, bian2016adaptive, bian2018stochastic, bian2019continuous}. In the \(k\)-th step of PI, the control policy \(u_k = -K_k x + e_k\), where \(e_k\) represents the exploration noise, generates the data. The \(k\)-th step of PI can specifically be expressed as:
\begin{subequations}\label{eqs.PI}
\begin{align}
\label{eq.PIa}
\begin{split}
x^{\top} \otimes x^{\top} |_{t}^{t+\delta_t} \mathrm{vec}(P_k) =& 2 \left( \int_{t}^{t+\delta_t} x^{\top} \otimes \rd \hat{w}_k^{\top} \right) \mathrm{vec} (B^{\top}P_k) \\
&+ \left( \int_{t}^{t+\delta_t} u_k^{\top} \otimes u_k^{\top} \rd s \right) \mathrm{vec} (\Sigma_{P_k}) \\
&- \left( \int_{t}^{t+\delta_t} x^{\top} \otimes x^{\top} \rd s \right) \left(K_k^{\top} \otimes K_k^{\top} \right) \mathrm{vec}(\Sigma_{P_k}) \\
& -\left( \int_{t}^{t+\delta_t} x^{\top} \otimes x^{\top} \rd s \right) \mathrm{vec} (Q + K_k^{\top}RK_k),
\end{split} \\
\label{eq.PIb}
K_{k+1} =& \left(\Sigma_{P_k} + R\right)^{-1}B^{\top}P_k.
\end{align}
\end{subequations}
Here, $\rd \hat{w}_k = e_k \rd s + \sum_{i=1}^{q_1} F_i x \rd w_{1i} + \sum_{i=1}^{q_2} G_i u \rd w_{2i}$ is assumed known, as it can be reconstructed through noisy control inputs and the state samples \citep{jiang2011approximate,jiang2014adaptive,bian2016adaptive,bian2018stochastic,bian2019continuous}.

\begin{remark}\label{remark.Kleinman}
The matrix \( P_k \) in \eqref{eq.PIa} is the solution to the Lyapunov equation
\begin{equation}\label{eq.PI aa}
(A-BK_k)^{\top}P_k + P_k(A-BK_k) + Q + \Pi_{P_k} + K_k^{\top}\left(\Sigma_{P_k}+R\right)K_k = 0,
\end{equation}
where \eqref{eq.PIa} can be regarded as the data-driven counterpart of \eqref{eq.PI aa}. Throughout the PI process, the optimal solution of the Riccati equation \( P^* \) is eventually obtained, i.e., \(\lim_{k\to\infty} P_k = P^*\) (see Theorem 1 in \cite{kleinman1969optimal} and Theorem 1 in \cite{bian2016adaptive}).
\end{remark}

As shown in Remark \ref{remark.Kleinman}, when the system model is known, or when there is no computation error—meaning the stochastic integrals can be perfectly approximated by discrete-time state samples—the PI process in \eqref{eqs.PI} eventually converges to \( P^* \) and \( K^* \). However, as we have discussed in Section \ref{sec.intro}, computational error is inevitable since it is impossible to sample with an infinitesimally small period. In this context, studying the impact of computational error on the algorithm's convergence, specifically on the solution's accuracy at convergence, becomes crucial. This brings up three key questions:
\begin{enumerate}[label=\textbf{Q\arabic*:}]
\item How can we quantify the computational error of the integral in \eqref{eq.PIa}? 
\item  How does this computational error affect the solution of PI in each step?
\item How does the computational error influence the overall convergence of ADP algorithm?
\end{enumerate}

\section{Computational Error Analysis}\label{sec.computational error analysis}

To answer \textbf{Q1}, we examine the computational error of the stochastic integral when applying the Euler-Maruyama method. Consider the stochastic integral \( X(t) = X(0) + \int_{0}^{t} \mu(X(s))\rd s + \int_{0}^{t} \sigma(X(s))\rd W(s) \), where \( W(s) \) represents the standard Brownian motion. The following lemma can quantify the computational error of this integral.

\begin{lemma}[Computational error of Euler-Maruyama
method \citep{bally1995euler,bally1996law1,bally1996law2}]\label{lemma.Euler computational error}
Suppose $X^{h}(t)$ is the discrete-time approximation of $X(t)$ through the Euler-Maruyama method with the time step $h$. If $\mu$ and $\sigma$ are bounded and Lipschitz continuous, then the computational error $|\mathbb{E}X(t) - \mathbb{E}X^{h}(t)|$ satisfies
\begin{equation}
|\mathbb{E}X(t) - \mathbb{E}X^{h}(t)| \leq Ch, 
\end{equation}
with $C>0$ being a constant number depending on the formulation of the integral.
\end{lemma}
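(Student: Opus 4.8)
The plan is to prove the $O(h)$ weak error via the backward Kolmogorov (Talay--Tubaro) framework, which is the canonical route to weak order one for the Euler--Maruyama scheme. Writing the quantity of interest as $\mathbb{E}f(X(t)) - \mathbb{E}f(X^{h}(t))$ with $f$ the linear functional that extracts the expectation (so $f(x)=x$ componentwise in the present statement), I would first introduce the auxiliary function $u(s,x) := \mathbb{E}\left[ f(X(t)) \mid X(s) = x \right]$. By the Feynman--Kac / backward Kolmogorov identity, $u$ solves $\partial_s u + \mathcal{L}u = 0$ on $[0,t]$ with terminal data $u(t,\cdot)=f$, where $\mathcal{L} = \mu^{\top}\nabla + \tfrac12 \mathrm{tr}(\sigma\sigma^{\top}\nabla^2)$ is the generator of the diffusion. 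The key structural fact is that $u(0,x) = \mathbb{E}f(X(t))$ exactly, so the whole error is expressed through the single smooth object $u$.

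Next I would telescope along the Euler grid $s_n = nh$, $n = 0,\dots,N$, with $N = t/h$:
\begin{equation*}
\mathbb{E}f(X^{h}(t)) - \mathbb{E}f(X(t)) = \mathbb{E}\,u(t, X^{h}(t)) - u(0, x) = \sum_{n=0}^{N-1} \mathbb{E}\left[ u(s_{n+1}, X^{h}(s_{n+1})) - u(s_n, X^{h}(s_n)) \right].
\end{equation*}
For the continuous process each analogous increment has zero conditional mean by the PDE; the numerical scheme fails to reproduce this only at second order. Concretely, I would apply an It\^o--Taylor expansion of $u(s_{n+1}, X^{h}(s_{n+1}))$ about $(s_n, X^{h}(s_n))$, take the conditional expectation given $X^{h}(s_n)$, and use that one Euler step matches the true drift and diffusion coefficients of $\mathcal{L}$ up to first order. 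The order-$h$ terms cancel against $\partial_s u + \mathcal{L}u = 0$, leaving a per-step local error bounded by a constant times $h^2$, where the constant collects bounds on $\mu$, $\sigma$ and the spatial derivatives of $u$ up to fourth order together with the moments of the Gaussian increments. Summing the $N = t/h$ such terms yields the global bound $Ch$, with $C$ depending on $t$ and on the coefficient bounds, exactly the form claimed.

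The main obstacle is regularity: the clean Taylor argument above needs $u(s,\cdot)$ to have bounded derivatives up to fourth order, which classically requires $\mu,\sigma \in C^{4}_{b}$, whereas the lemma assumes only boundedness and Lipschitz continuity. This gap is precisely what the cited works of Bally and Talay resolve: using Malliavin calculus together with a (hypo)ellipticity/nondegeneracy argument, the transition semigroup smooths the possibly irregular terminal functional, so that $u$ acquires the required spatial regularity and the derivative bounds hold uniformly on $[0,t)$ even under the weak hypotheses. I would therefore either invoke their regularization estimates directly to justify the derivative bounds on $u$, or, since in our stochastic LQR setting the coefficients are linear and the relevant functionals are polynomial (the integrand is $x^{\top}\otimes x^{\top}$ and its relatives), simply observe that $u$ is then manifestly smooth with controlled derivatives, so the Talay--Tubaro estimate applies verbatim and delivers $O(h)$ without appealing to the general low-regularity theory.
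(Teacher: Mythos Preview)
The paper does not supply its own proof of this lemma; it is stated as a cited result from \cite{bally1995euler,bally1996law1,bally1996law2} and used as a black box. So there is nothing in the paper to compare your argument against at the level of proof steps.

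That said, your proposal is correct and is exactly the standard route: the Talay--Tubaro backward-Kolmogorov/telescoping argument giving local error $O(h^2)$ and global weak error $O(h)$. You also correctly identify the regularity gap between the stated hypotheses (bounded, Lipschitz $\mu,\sigma$) and what the naive Taylor expansion needs ($C^4_b$), and that the cited Bally--Talay papers close this gap via Malliavin-calculus smoothing of the transition semigroup. Your fallback observation---that in the LQR setting the coefficients are linear and the relevant functionals polynomial, so $u$ is manifestly smooth and the classical Talay--Tubaro estimate applies directly---is the pragmatic way to handle the application in this paper without invoking the heavier low-regularity machinery. One minor caveat: linear coefficients are Lipschitz but not bounded, so strictly speaking neither the lemma's hypotheses nor the $C^4_b$ framework apply as stated; in practice one controls moments of the linear SDE and its Euler scheme directly, and the polynomial growth of $u$ and its derivatives is compensated by those moment bounds. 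This is routine but worth flagging if you want the argument to be airtight for the paper's actual use case.
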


Next, we will address \textbf{Q2}, i.e., examining how the computational error in the stochastic integral in \eqref{eq.PIa} affects the solution in each step of PI. According to \cite{jiang2011approximate,jiang2014adaptive,bian2016adaptive}, PI  cannot be directly solved by \eqref{eq.PIa}. Instead, \eqref{eq.PIa} can be formulated in a matrix equation \eqref{eq.PI matrix} by collecting the data in $l$ time intervals $[t_0, t_1], [t_1, t_2], ..., [t_{l-1}, t_l]$:
\begin{equation}\label{eq.PI matrix}
\begin{aligned}
\Theta_k
\begin{bmatrix}
\mathrm{vec}(P_k) \\
\mathrm{vec}(B^{\top}P_k) \\
\mathrm{vec}(\Sigma_{P_k})
\end{bmatrix} = \Xi_k,
\end{aligned}
\end{equation}
with
\begin{equation}\label{eq.integrals matrix}
\begin{aligned}
\Theta_k &= \begin{bmatrix}
\delta_{xx}^{k}, & -2I_{x\hat{w}}^{k}, & I_{xx}^k\left(K_k^{\top} \otimes K_k^{\top}\right) - I_{uu}^k    
\end{bmatrix},
\\
\Xi_k &= -I_{xx}^k \mathrm{vec}(Q+ K_k^{\top}RK_k),
\\
\delta^{k}_{xx} &= \begin{bmatrix}
x \otimes x|^{t_1,k}_{t_0,k}, &
x \otimes x|^{t_2,k}_{t_1,k}, &
\hdots, &
x \otimes x|^{t_{l},k}_{t_{l-1},k}
\end{bmatrix}^{\top},    
\\
I^{k}_{xx} &= 
\begin{bmatrix}
\int_{t_{0,k}}^{t_{1,k}} x \otimes x \rd s, &
\int_{t_{1,k}}^{t_{2,k}} x \otimes x \rd s, &
\hdots, &
\int_{t_{l-1,k}}^{t_{l,k}} x \otimes x \rd s
\end{bmatrix}^{\top},
\\
I^{k}_{uu} &= 
\begin{bmatrix}
\int_{t_{0,k}}^{t_{1,k}} u_k \otimes u_k \rd s, &
\int_{t_{1,k}}^{t_{2,k}} u_k \otimes u_k \rd s, &
\hdots, &
\int_{t_{l-1,k}}^{t_{l,k}} u_k \otimes u_k \rd s
\end{bmatrix}^{\top},
\\
I^{k}_{x\hat{w}} &= 
\begin{bmatrix}
\int_{t_{0,k}}^{t_{1,k}} x \otimes \rd \hat{w}_k, &
\int_{t_{1,k}}^{t_{2,k}} x \otimes \rd \hat{w}_k, &
\hdots, &
\int_{t_{l-1,k}}^{t_{l,k}} x \otimes \rd \hat{w}_k
\end{bmatrix}^{\top}.
\end{aligned}
\end{equation}
For simplicity, we denote \( s_k := \begin{bmatrix} \mathrm{vec}^{\top}(P_k),& \mathrm{vec}^{\top}(B^{\top}P_k), & \mathrm{vec}^{\top}(\Sigma_{P_k}) \end{bmatrix}^{\top} \). In the matrix equation \eqref{eq.PI matrix}, both \( \Theta_k \) and \( \Xi_k \) are stochastic. 
By taking expectations on both sides of \eqref{eq.PI matrix}, we obtain
\begin{equation}\label{eq.PI matrix expectation}
\mathbb{E}[\Theta_k] s_k = \mathbb{E}[\Xi_k],    
\end{equation}
since \( s_k \) is a deterministic vector. Under the law of large numbers \citep{grimmett2020probability}, we should gather a significant amount of data and compute the average to approximate \( \mathbb{E}[\Theta_k] \) and \( \mathbb{E}[\Xi_k] \) using the data \citep{jiang2011approximate,jiang2014adaptive,bian2016adaptive}.


\begin{remark}
The uniqueness of the solution for equation \eqref{eq.PI matrix} hinges on the rank condition:
$$
\mathrm{rank} \begin{bmatrix}
I_{xx}^k & I_{x\hat{w}}^k & I_{uu}^k    
\end{bmatrix} = \frac{n(n+1)}{2} + mn + \frac{m(m+1)}{2}.
$$
This condition is verified in \cite{bian2016adaptive} and can be seen as the persistent excitation condition in adaptive control. A bounded exploration noise \( e_k \), such as sinusoidal or random signals, can be introduced to the input during learning to meet the rank condition. Given that \( K_k \) is admissible for all \( k = 0, 1, \ldots \), the solution of system \eqref{eq.SDE} equipped with the controller \( u_k \) is guaranteed to not escape in finite time with a probability of one \citep{bian2016adaptive}.
\end{remark}

As mentioned in Section \ref{sec.intro}, the integrals in matrices $\Theta_k$ and $\Xi_k$ cannot be obtained analytically since the integrand is not analytically accessible in a model-free setting. Instead, we opt to approximate these integrals using samples collected at discrete time points using the canonical Euler-Maruyama method. Denote the approximated matrix acquired by the Euler-Maruyama method as $\mathbb{E}[\hat{\Theta}_k]$ and $\mathbb{E}[\hat{\Xi}_k]$, thus the matrix equation we actually solve is 
\begin{equation}\label{eq.PI matrix expectation Euler}
\mathbb{E}[\hat{\Theta}_k] \hat{s}_k = \mathbb{E}[\hat{\Xi}_k].    
\end{equation}
Here, \( \hat{s}_k \) is the solution of PI obtained by using the Euler-Maruyama method to approximate the data matrices. The following theorem establishes the error bound between the idealized solution \( s_k \), obtained by \eqref{eq.PI matrix expectation} without computational error, and \( \hat{s}_k \).

\begin{theorem}\label{theorem.computational error per iter}
If $|s_k | \leq b$, where $b > 0$ is a constant number, and $l = n^2 + m^2 + mn$, then we have $|s_k - \hat{s}_k| \leq C_s h$, where $C_s \in \mathbb{R}^{l}$ is a constant vector with all elements being positive.
\end{theorem}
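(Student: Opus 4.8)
The plan is to subtract the idealized system \eqref{eq.PI matrix expectation} from the Euler--Maruyama system \eqref{eq.PI matrix expectation Euler} and to propagate the per-entry integration error through the inverse of the coefficient matrix. First I would introduce the perturbations $\Delta_\Theta := \mathbb{E}[\hat{\Theta}_k] - \mathbb{E}[\Theta_k]$ and $\Delta_\Xi := \mathbb{E}[\hat{\Xi}_k] - \mathbb{E}[\Xi_k]$, and derive the algebraic identity
\[
\mathbb{E}[\hat{\Theta}_k](\hat{s}_k - s_k) = \Delta_\Xi - \Delta_\Theta\, s_k,
\]
obtained by adding and subtracting $\mathbb{E}[\hat{\Theta}_k] s_k$ and invoking $\mathbb{E}[\Theta_k] s_k = \mathbb{E}[\Xi_k]$. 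This isolates the solution error as the image, under $(\mathbb{E}[\hat{\Theta}_k])^{-1}$, of a residual built entirely from integration errors and the bounded vector $s_k$.

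Next I would quantify $\Delta_\Theta$ and $\Delta_\Xi$ entrywise. Each entry of $\Theta_k$ and $\Xi_k$ in \eqref{eq.integrals matrix} is the expectation of a stochastic integral of the state/control process, namely the blocks $\delta^k_{xx}$, $I^k_{xx}$, $I^k_{uu}$ and $I^k_{x\hat{w}}$. Since $K_k$ is admissible and the dynamics \eqref{eq.SDE}--\eqref{eq.SDE noise} are linear, the drift and diffusion of the augmented process generating these integrals are Lipschitz with bounded moments, so Lemma \ref{lemma.Euler computational error} applies to each scalar entry. This yields element-wise bounds $|\Delta_\Theta| \leq C_\Theta h$ and $|\Delta_\Xi| \leq C_\Xi h$, where $C_\Theta$ and $C_\Xi$ are constant (entrywise nonnegative) and independent of $h$.

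I would then control the inverse. Because $l = n^2 + m^2 + mn$ equals the dimension of $s_k$, the matrix $\mathbb{E}[\Theta_k]$ is square, and the rank condition of the preceding remark makes it nonsingular. Writing $\mathbb{E}[\hat{\Theta}_k] = \mathbb{E}[\Theta_k] + \Delta_\Theta$ with $\|\Delta_\Theta\|_F = O(h)$, a Neumann-series argument shows that for all sufficiently small $h$ the perturbed matrix is invertible with $\|(\mathbb{E}[\hat{\Theta}_k])^{-1}\|_F$ uniformly bounded by some $M$. Substituting the entrywise bounds together with $|s_k| \leq b$ into the identity and taking Euclidean norms gives $\|\hat{s}_k - s_k\| \leq M\,(\|C_\Xi\| + b\,\|C_\Theta\|_F)\,h =: C h$; since each component is dominated by the norm, setting $C_s = C\,(1,\dots,1)^\top \in \mathbb{R}^l$ yields $|s_k - \hat{s}_k| \leq C_s h$ with strictly positive entries, which is the claim.

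The hard part will be the uniform-in-$h$ invertibility and norm bound on $(\mathbb{E}[\hat{\Theta}_k])^{-1}$: the entrywise $O(h)$ conclusion hinges on the inverse not blowing up as $h \to 0$, which must be argued from the nonsingularity of the unperturbed $\mathbb{E}[\Theta_k]$ rather than merely assumed. A secondary technical point is verifying that the integrands in \eqref{eq.integrals matrix}—products of state and control components, together with the stochastic term $I^k_{x\hat{w}}$—genuinely satisfy the boundedness and Lipschitz hypotheses of Lemma \ref{lemma.Euler computational error}, so that the per-entry error is legitimately $O(h)$ before it is aggregated into the constant vector $C_s$.
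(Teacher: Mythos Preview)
Your proposal is correct and follows essentially the same route as the paper: derive the identity $\mathbb{E}[\hat{\Theta}_k](\hat{s}_k - s_k) = \Delta_\Xi - \Delta_\Theta s_k$, bound $\Delta_\Theta$ and $\Delta_\Xi$ entrywise by $O(h)$ via Lemma~\ref{lemma.Euler computational error}, and propagate through $(\mathbb{E}[\hat{\Theta}_k])^{-1}$ using the squareness implied by $l = n^2 + m^2 + mn$. The paper in fact sidesteps the point you flag as ``the hard part'' by simply asserting that $\left|(\mathbb{E}[\hat{\Theta}_k])^{-1}\right|$ is a constant independent of $h$, so your Neumann-series justification is more careful than what the paper provides.
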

\begin{proof}
According to Lemma \ref{lemma.Euler computational error}, if approximated by the Euler-Maruyama method, the computational error for all the elements in the integrals $I_{xx}^k$, $I_{uu}^k$ and $I_{x\hat{w}}^k$ in \eqref{eq.PI matrix expectation} satisfies $O(h)$. Thus, we have 
$\left|\mathbb{E}[\Theta_k] - \mathbb{E}[\hat{\Theta}_k]\right| \leq C_{\Theta} h$ and $\left|\mathbb{E}[\Xi_k] - \mathbb{E}[\hat{\Xi}_k]\right| \leq C_{\Xi} h$ , where $C_{\Theta}$ is an ${l \times l}$ matrix and $C_{\zeta}$ is an ${l}$ dimension vector with all elements being positive. For $l = n^2 + m^2 + mn$, $\Theta_k$ and $\hat{\Theta}_k$ are square matrices, thus we have
\begin{equation}\nonumber
\begin{aligned}
\left|s_k - \hat{s}_k\right| &= \left| \left(\mathbb{E}[\hat{\Theta}_k]\right)^{-1}\mathbb{E}[\hat{\Theta}_k] (s_k -\hat{s}_k)\right|
\leq \left|\left(\mathbb{E}[\hat{\Theta}_k]\right)^{-1} \right| \left|\mathbb{E}[\hat{\Theta}_k] (s_k -\hat{s}_k) \right|.
\end{aligned}
\end{equation}

Since $\hat{s}_k, s_k$ satisfies $\hat{\Xi}_k - \mathbb{E}[\hat{\Theta}_k] \hat{s}_k = {\Xi}_k - \mathbb{E}[{\Theta}_k] {s}_k = 0$, we obtain
\begin{equation}\label{eq.proof}
\begin{aligned}
\left|s_k - \hat{s}_k\right|
&\leq \left|\left(\mathbb{E}[\hat{\Theta}_k]\right)^{-1} \right| \left|\mathbb{E}[\hat{\Theta}_k]s_k  - \mathbb{E}[\hat{\Theta}_k] \hat{s}_k - \hat{\Xi}_k + \hat{\Xi}_k \right|
\\
&\leq \left|\left(\mathbb{E}[\hat{\Theta}_k]\right)^{-1} \right| \left|\mathbb{E}[\hat{\Theta}_k]s_k - \hat{\Xi}_k - \left(\mathbb{E}[\hat{\Theta}_k] \hat{s}_k - \hat{\Xi}_k \right) \right|
\\
&= \left|\left(\mathbb{E}[\hat{\Theta}_k]\right)^{-1} \right| \left|\mathbb{E}[\hat{\Theta}_k]s_k - \hat{\Xi}_k - \left(\mathbb{E}[{\Theta}_k] {s}_k - {\Xi}_k \right) \right|
\\
&\leq \left|\left(\mathbb{E}[\hat{\Theta}_k]\right)^{-1} \right| \left(\left|\mathbb{E}[\hat{\Theta}_k]s_k - \mathbb{E}[{\Theta}_k] {s}_k \right| + \left|{\Xi}_k  - \hat{\Xi}_k \right|\right)
\\
&\leq 
\left|\left(\mathbb{E}[\hat{\Theta}_k]\right)^{-1} \right| \left(
b C_{\Theta} \mathbf{1}_l  + C_{\Xi}
\right) \cdot h.
\end{aligned}    
\end{equation}
Here, \( \mathbf{1}_l \in \mathbb{R}^{l }\) represents a vector whose elements are all ones. Note that the last row of \eqref{eq.proof} is based on the fact that $|AB| \leq |A||B|$ for matrices $A$ and $B$ and $|a| \leq \|a\|\mathbf{1}$ for vector $a$. Since \( \mathbb{E}[\hat{\Theta}_k] \), \( C_{\Theta} \), and \( C_{\Xi} \) are constants independent of \( h \), we define \( \left|\left(\mathbb{E}[\hat{\Theta}_k]\right)^{-1} \right| \left( b C_{\Theta}\mathbf{1}_l  + C_{\Xi} \right) \) as \( C_s \) with all elements being positive and obtain $|s_k - \hat{s}_k| \leq C_s h$.
\end{proof}

\section{Convergence Analysis}\label{sec.convergence analysis}

To answer \textbf{Q3}, we first formulate the PI as Newton's method to solve the Ricatti equation, as shown in the subsequent lemma:
\begin{lemma}[PI as Newton's method]\label{lemma.Newton's method}
Iteratively performing \eqref{eq.PI matrix expectation} and \eqref{eq.PIb} equals Newton's method to solve the Ricatti equation \eqref{eqs.ricatti}. Specifically, the iteration of Newton's method can be written as 
\begin{equation}\label{eq.Newton's method original}
P_{k+1} = P_k - \left(\mathcal{T}'_{P_k}\right)^{-1} \mathcal{T}_{P_k}.     
\end{equation}
Here, $\mathcal{T}'_{P}$ is the Fréchet differential of $\mathcal{T}_P$ at $P$ where $\mathcal{T}_P$ is defined as in \eqref{eq.ricatti definition}.
\end{lemma}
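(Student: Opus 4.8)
The plan is to adopt the classical argument of \cite{kleinman1969optimal}, extended to the generalized Riccati operator $\mathcal{T}_P$ in \eqref{eq.ricatti definition}, and to show that a single Newton step from $P_k$ reproduces exactly the Lyapunov equation \eqref{eq.PI aa} that the data-driven update \eqref{eq.PI matrix expectation} solves together with the policy improvement \eqref{eq.PIb}. First I would rewrite \eqref{eq.Newton's method original} in its incremental form $\mathcal{T}'_{P_k}(P_{k+1}-P_k) = -\mathcal{T}_{P_k}$, equivalently $\mathcal{T}'_{P_k}(P_{k+1}) = \mathcal{T}'_{P_k}(P_k) - \mathcal{T}_{P_k}$, so that no operator inverse has to be handled explicitly and everything reduces to identifying the action of the linear operator $\mathcal{T}'_{P_k}$.

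The core computation is the Fréchet derivative of $\mathcal{T}_P$. The terms $A^{\top}P + PA$, $\Pi_P$ and $Q$ are affine in $P$, so their differentials in a direction $H$ are immediate, namely $A^{\top}H + HA$, $\Pi_H$ and $0$. The only nonlinear term is $-PB(\Sigma_P+R)^{-1}B^{\top}P$; since $\Sigma_P$ is itself linear in $P$, I would differentiate it by the product rule together with the identity for the derivative of a matrix inverse, $\frac{d}{dt}(\Sigma_{P+tH}+R)^{-1}|_{t=0} = -(\Sigma_P+R)^{-1}\Sigma_H(\Sigma_P+R)^{-1}$, where $\Sigma_H := \sum_{i=1}^{q_2} G_i^{\top}B^{\top}HBG_i$. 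Introducing $K_{k+1} = (\Sigma_{P_k}+R)^{-1}B^{\top}P_k$ from \eqref{eq.PIb} and using the symmetry of $P_k$ and of $(\Sigma_{P_k}+R)^{-1}$, the three resulting pieces collapse into
\begin{equation}\nonumber
\mathcal{T}'_{P_k}(H) = (A-BK_{k+1})^{\top}H + H(A-BK_{k+1}) + \Pi_H + K_{k+1}^{\top}\Sigma_H K_{k+1}.
\end{equation}

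With this closed form in hand, I would evaluate both sides of the incremental Newton equation. Substituting $B^{\top}P_k = (\Sigma_{P_k}+R)K_{k+1}$ into $\mathcal{T}'_{P_k}(P_k)$ and $\mathcal{T}_{P_k}$ and cancelling the shared $A^{\top}P_k + P_k A + \Pi_{P_k}$ terms, the right-hand side $\mathcal{T}'_{P_k}(P_k) - \mathcal{T}_{P_k}$ simplifies, after the quadratic-in-$K_{k+1}$ contributions telescope via $(\Sigma_{P_k}+R)-\Sigma_{P_k}=R$, to exactly $-(Q + K_{k+1}^{\top}RK_{k+1})$. Equating this with the left-hand side $\mathcal{T}'_{P_k}(P_{k+1})$ and moving all terms to one side yields precisely
\begin{equation}\nonumber
(A-BK_{k+1})^{\top}P_{k+1} + P_{k+1}(A-BK_{k+1}) + Q + \Pi_{P_{k+1}} + K_{k+1}^{\top}(\Sigma_{P_{k+1}}+R)K_{k+1} = 0,
\end{equation}
which is \eqref{eq.PI aa} written for the pair $(P_{k+1},K_{k+1})$. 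Since Remark \ref{remark.Kleinman} identifies the data-driven step \eqref{eq.PI matrix expectation} as the solver of this Lyapunov equation and \eqref{eq.PIb} as the policy improvement producing $K_{k+1}$, the two procedures generate the same sequence, proving the claim.

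I expect the main obstacle to be the nonlinear term: correctly differentiating $-PB(\Sigma_P+R)^{-1}B^{\top}P$ and, above all, carefully tracking the symmetry identities $P_k B(\Sigma_{P_k}+R)^{-1} = K_{k+1}^{\top}$ and $B^{\top}P_k = (\Sigma_{P_k}+R)K_{k+1}$ so that the several quadratic-in-$K_{k+1}$ terms on the right-hand side cancel down to the clean residual $-(Q+K_{k+1}^{\top}RK_{k+1})$. A secondary subtlety worth flagging is the index bookkeeping: the Fréchet derivative taken at $P_k$ naturally brings in $K_{k+1}$, so the Newton iterate $P_{k+1}$ is the one consistent with $K_{k+1}$, which is exactly the convention used by PI.
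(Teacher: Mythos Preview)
Your proposal is correct and follows the same overall route as the paper: compute the Fr\'echet derivative of $\mathcal{T}_P$, substitute into the Newton equation $\mathcal{T}'_{P_k}(P_{k+1}-P_k)=-\mathcal{T}_{P_k}$, and identify the result with the Lyapunov equation \eqref{eq.PI aa} underlying \eqref{eq.PI matrix expectation}--\eqref{eq.PIb}. The differences are purely in execution. The paper expands $(\Sigma_{P+\epsilon W}+R)^{-1}$ via the Woodbury identity, passes to the G\^ateaux limit, and carries out all the simplification in raw $(P_k,B,\Sigma_{P_k},R)$ form before finally recognizing the PI structure; you instead use the standard derivative-of-inverse formula and immediately package the Fr\'echet differential through the gain $K_{k+1}=(\Sigma_{P_k}+R)^{-1}B^{\top}P_k$, which compresses the operator to $(A-BK_{k+1})^{\top}H+H(A-BK_{k+1})+\Pi_H+K_{k+1}^{\top}\Sigma_H K_{k+1}$ and makes the cancellation $\mathcal{T}'_{P_k}(P_k)-\mathcal{T}_{P_k}=-(Q+K_{k+1}^{\top}RK_{k+1})$ transparent. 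Your early introduction of $K_{k+1}$ also surfaces the index shift (Newton at $P_k$ produces the Lyapunov equation for the pair $(P_{k+1},K_{k+1})$), a point the paper leaves implicit in its closing sentence. Both arguments are equally valid; yours is slightly more economical.
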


\begin{proof}
$\forall P \succ 0$ and $W \succ 0$, we have
\begin{equation}\label{eq.delta T}
\begin{aligned}
&\mathcal{T}_{P+\epsilon W} - \mathcal{T}_P 
\\
=& A^{\top}(P+\epsilon W) + (P+\epsilon W)A - (P+\epsilon W)B\left(\Sigma_{P+\epsilon W} + R\right)^{-1}B^{\top}(P+\epsilon W) + \Pi_{P+\epsilon W} + Q 
\\
-& \left(A^{\top}P + PA - PB\left(\Sigma_{P} + R\right)^{-1}B^{\top}P + \Pi_P + Q\right).
\end{aligned}  
\end{equation}
By using Woodbury matrix identity \citep{hager1989updating}, we obtain
\begin{equation}\label{eq.matrix inverse lemma}
\begin{aligned}
\left(\Sigma_{P+\epsilon W} + R\right)^{-1} &= \left(\Sigma_{P} + R+\Sigma_{\epsilon W}\right)^{-1} 
\\
&= \left(\Sigma_{P} + R\right)^{-1} - \left((\Sigma_{P} + R) + \frac{1}{\epsilon}(\Sigma_{P} + R) \Sigma_{W}^{-1} (\Sigma_{P} + R) \right)^{-1}.  
\end{aligned}
\end{equation}
Combing \eqref{eq.matrix inverse lemma} with \eqref{eq.delta T}, we have
\begin{equation}\nonumber
\begin{aligned}
&\mathcal{T}_{P+\epsilon W} - \mathcal{T}_P 
\\
=& \epsilon A^{\top}W + \epsilon WA + \Pi_{\epsilon W} - \epsilon WB\left(\Sigma_{P} + R\right)^{-1}B^{\top}P - \epsilon PB\left(\Sigma_{P} + R\right)^{-1}B^{\top}W
\\
&+(P+\epsilon W)B\left((\Sigma_{P} + R) + \frac{1}{\epsilon}(\Sigma_{P} + R) \Sigma_{W}^{-1} (\Sigma_{P} + R) \right)^{-1}B^{\top}(P+\epsilon W)
\\
&-\epsilon^2WB\left(\Sigma_{P} + R\right)^{-1}B^{\top}W.
\end{aligned}  
\end{equation}
Thus, the Gâteaux and Fréchet differential \citep{kantorovich2016functional,yosida2012functional} of $\mathcal{T}_P$ at $P$ can be expressed as
\begin{equation}\label{eq.F derivative}
\begin{aligned}
\mathcal{T}'_P W =& \lim_{\epsilon\to 0} \frac{\mathcal{T}_{P+\epsilon W}-\mathcal{T}_P}{\epsilon}
\\
=& A^{\top}W + WA + \Pi_W - WB\left(\Sigma_{P} + R\right)^{-1}B^{\top}P - PB\left(\Sigma_{P} + R\right)^{-1}B^{\top}W
\\
&+PB(\Sigma_{P} + R)^{-1} \Sigma_{W} (\Sigma_{P} + R)^{-1}B^{\top}P.
\end{aligned}  
\end{equation}
To find the root of Ricatti equation \eqref{eqs.ricatti}, Newton's method produces successively better approximations of the root. 
Newton's iteration in \eqref{eq.Newton's method original} equals to
\begin{equation}\label{eq.Newton's method}
\mathcal{T}'_{P_k}P_{k+1} - \mathcal{T}'_{P_k}P_k  + \mathcal{T}_{P_k} = 0.  
\end{equation}
By substituting $\mathcal{T}'_{P_k}$ from \eqref{eq.F derivative} into \eqref{eq.Newton's method}, we have
\begin{equation}\label{eq.Newton's method2}
\begin{aligned}
&A^{\top}P_{k+1} + P_{k+1}A + \Pi_{P_{k+1}} - P_{k+1}B\left(\Sigma_{P_k} + R\right)^{-1}B^{\top}P_k - P_kB\left(\Sigma_{P_k} + R\right)^{-1}B^{\top}P_{k+1}
\\
+&P_kB(\Sigma_{P_k} + R)^{-1} \Sigma_{P_{k+1}} (\Sigma_{P_k} + R)^{-1}B^{\top}P_k  -A^{\top}P_k-P_kA-\Pi_{P_k} 
\\
+& 2P_kB\left(\Sigma_{P_k} + R\right)^{-1}B^{\top}P_k - P_kB\left(\Sigma_{P_k} + R\right)^{-1}\Sigma_{P_k}\left(\Sigma_{P_k} + R\right)^{-1}B^{\top}P_k 
\\
+& A^{\top}P_k + P_kA - P_kB\left(\Sigma_{P_k} + R\right)^{-1}B^{\top}P_k +\Pi_{P_k} + Q = 0.  
\\
& \ \ \ \ \ \ \ \ \ \ \ \ \ \ \ \ \ \ \ \ \ \ \ \ \ \ \ \ \ \ \ \ \ \ \ \ \ \ \ \ \ \ \ \ \ \ \ \ \ \ \ \ \ \ \ \ \  \Updownarrow
\\
&A^{\top}P_{k+1} + P_{k+1}A + \Pi_{P_{k+1}} - P_{k+1}B\left(\Sigma_{P_k} + R\right)^{-1}B^{\top}P_k - P_kB\left(\Sigma_{P_k} + R\right)^{-1}B^{\top}P_{k+1}
\\
+&P_kB(\Sigma_{P_k} + R)^{-1} \Sigma_{P_{k+1}} (\Sigma_{P_k} + R)^{-1}B^{\top}P_k
\\
+& P_kB\left(\Sigma_{P_k} + R\right)^{-1}B^{\top}P_k - P_kB\left(\Sigma_{P_k} + R\right)^{-1}\Sigma_{P_k}\left(\Sigma_{P_k} + R\right)^{-1}B^{\top}P_k + Q = 0.
\end{aligned} 
\end{equation}
By noticing that
\begin{equation}\nonumber
\begin{aligned}
&P_kB\left(\Sigma_{P_k} + R\right)^{-1}B^{\top}P_k - P_kB\left(\Sigma_{P_k} + R\right)^{-1}\Sigma_{P_k}\left(\Sigma_{P_k} + R\right)^{-1}B^{\top}P_k 
\\
=&
P_kB\left(\Sigma_{P_k} + R\right)^{-1}R\left(\Sigma_{P_k} + R\right)^{-1}B^{\top}P_k, 
\end{aligned}  
\end{equation}
\eqref{eq.Newton's method2} can be transformed into:
\begin{equation}\label{eq.Newton's method3}
\begin{aligned}
&A^{\top}P_{k+1} -P_kB\left(\Sigma_{P_k}+R\right)^{-1}B^{\top}P_{k+1} + P_{k+1}A -P_{k+1}B\left(\Sigma_{P_k}+R\right)^{-1} B^{\top}P_k
\\
+&Q+\Pi_{P_{k+1}} + P_kB\left(\Sigma_{P_k}+R\right)^{-1}\left(\Sigma_{P_{k+1}} +R
\right)\left(\Sigma_{P_k}+R\right)^{-1} B^{\top}P_k = 0.
\end{aligned} 
\end{equation}

As for PI, iteratively applying \eqref{eq.PI matrix expectation} and solving for the $k$-th step control gain matrix \( K_k \) using \eqref{eq.PIb} is equivalent to iteratively performing \eqref{eq.PI aa} and \eqref{eq.PIb}. By combining \eqref{eq.PI aa} with \( K_k \) defined in \eqref{eq.PIb}, we can obtain \eqref{eq.Newton's method3}. This implies that each step of PI in \eqref{eqs.PI} inherently executes one step of Newton's method for the Riccati equation \eqref{eqs.ricatti}.
\end{proof}

According to Lemma \ref{lemma.Newton's method}, in the absence of computational error, the PI defined by \eqref{eq.PI matrix expectation} and \eqref{eq.PIb} is precisely Newton's method for solving the Riccati equation \eqref{eqs.ricatti}. However, as demonstrated in Theorem \ref{theorem.computational error per iter}, computational errors do exist in each step of PI, introducing an extra error term \( E_k \) to the original Newton's method in \eqref{eq.Newton's method original}:
\begin{equation}\label{eq.Newton's method with error}
\hat{P}_{k+1} = \hat{P}_k - (\mathcal{T}'_{\hat{P}_k})^{-1} \mathcal{T}_{\hat{P}_k} + E_k = \Psi_{\hat{P}_k} + E_k.   
\end{equation}
Here, the introduction of the extra error term \( E_k \) results from the computational error in the integral. To distinguish this iteration \eqref{eq.Newton's method with error} with computational error from the original Newton iteration \eqref{eq.Newton's method original}, we denote the result of the iteration in \eqref{eq.Newton's method with error} as \(\hat{P}\). Additionally, for simplicity, we define the operator \(\Psi_P := P - (\mathcal{T}_P^{'})^{-1}\mathcal{T}_P\). The following lemma provides a natural approach to analyze the convergence of the numerical iteration in \eqref{eq.Newton's method with error}.

\begin{lemma}[Error Bound of Fixed Point Iteration \cite{urabe1956convergence}]\label{lemma.convergence of FPI}    
Suppose the operator \(\Psi\) defined on the set \(\mathbb{R}^{n\times n}\) satisfies: (i) \textbf{Lipschitz condition}: \(\|\Psi_P - \Psi_{P'}\|_F \leq L_P \|P-P'\|_F\) with \(L_P < 1\); (ii) \textbf{Feasibility of the initial step of iteration}: For \(P_0 \in \mathbb{R}^{n\times n}\), \(\Psi_{P_0} \in \mathbb{R}^{n\times n}\); (iii) \textbf{Sphere containment}: The sphere \(\left\{h: \|h - P_1\|_F \leq \frac{L_P}{1-L_P} \| P_1 - P_0\|_F \right\}\) is contained in \(\mathbb{R}^{n\times n}\). Then, for the iteration \(\hat{P}_{k+1} = \Psi_{\hat{P}_k} + E_k\) in \eqref{eq.Newton's method with error} with \(\|\hat{E}_k \|_F \leq \Delta\) and the iteration \(P_{k+1} = \Psi_{P_k}\), if \(P_0 = \hat{P}_0\) is satisfied, we have
\begin{equation}\label{eq.convergence error}
\|P_k - \hat{P}_k\|_F < \frac{\Delta}{1-L_P}.   
\end{equation}
\end{lemma}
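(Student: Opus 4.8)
The plan is to treat this as a standard perturbed-contraction estimate, controlling the discrepancy between the exact Newton iterates $\{P_k\}$ and the computationally perturbed iterates $\{\hat{P}_k\}$ by a telescoping geometric series. First I would introduce the scalar error sequence $d_k := \|P_k - \hat{P}_k\|_F$ and record that the shared initialization $P_0 = \hat{P}_0$ forces $d_0 = 0$; this is the anchor that makes the final bound independent of the iteration index.

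Next I would subtract the two defining recursions. Since $P_{k+1} = \Psi_{P_k}$ and $\hat{P}_{k+1} = \Psi_{\hat{P}_k} + E_k$, their difference is $P_{k+1} - \hat{P}_{k+1} = (\Psi_{P_k} - \Psi_{\hat{P}_k}) - E_k$. Taking Frobenius norms, applying the triangle inequality, then invoking the Lipschitz condition (i) on the first term and the bound $\|E_k\|_F \leq \Delta$ on the second, yields the scalar recursion
\begin{equation}\nonumber
d_{k+1} \leq L_P\, d_k + \Delta.
\end{equation}
Unrolling this from $d_0 = 0$ gives $d_k \leq \Delta \sum_{j=0}^{k-1} L_P^{j} = \Delta \frac{1 - L_P^{k}}{1 - L_P}$, and because $L_P < 1$ the partial sum is strictly dominated by its limit, so $d_k < \frac{\Delta}{1-L_P}$, which is exactly \eqref{eq.convergence error}.

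The one point that requires care — and where conditions (ii) and (iii) enter — is justifying that the Lipschitz estimate (i) is legitimately applicable at every iterate. For the contraction to be used at step $k$, both $P_k$ and $\hat{P}_k$ must lie in the region on which $\Psi$ is defined and contractive. Condition (ii) guarantees the first image $\Psi_{P_0}$ is admissible, and condition (iii), the sphere-containment hypothesis, guarantees that every unperturbed iterate stays within the ball of radius $\frac{L_P}{1-L_P}\|P_1 - P_0\|_F$ about $P_1$; this radius itself arises from summing $\|P_{k+1}-P_k\|_F \leq L_P^{k}\|P_1 - P_0\|_F$, so that the whole orbit is trapped in a domain where the contraction holds. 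The perturbed orbit is then confined by the same estimate together with the smallness of $\Delta$. I expect the substantive difficulty to be exactly this bookkeeping — confirming the perturbed sequence never leaves the Lipschitz region — whereas the geometric-series bound is immediate. In the present application the domain is all of $\mathbb{R}^{n\times n}$, so the containment requirements in (ii) and (iii) are automatic, and the entire content of the lemma reduces to the recursion and its summation carried out above.
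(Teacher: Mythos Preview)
Your argument is correct and is exactly the standard perturbed-contraction estimate one expects here: derive the scalar recursion $d_{k+1}\le L_P d_k+\Delta$, unroll it from $d_0=0$, and bound the resulting geometric partial sum strictly by $\Delta/(1-L_P)$. The paper itself does not supply a proof of this lemma; it is quoted from \cite{urabe1956convergence} and used as a black box in the proof of Theorem~\ref{theorem.convergence of PI}. So there is no in-paper argument to compare against, but your derivation is the canonical one and would serve as a self-contained justification. Your closing remark that conditions (ii) and (iii) are vacuous when the domain is all of $\mathbb{R}^{n\times n}$ is also accurate in the paper's setting.
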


Lemma \ref{lemma.convergence of FPI} indicates that the extra error term's upper bound $\Delta$ constrains the discrepancy between solutions of Newton's method with and without computational error. Combine Lemma \ref{lemma.Newton's method}, Lemma \ref{lemma.convergence of FPI} with Theorem \ref{theorem.computational error per iter}, we can conclude with the following theorem.
\begin{theorem}[Convergence of PI with Computational Error]\label{theorem.convergence of PI}
Suppose the conditions in Theorem \ref{theorem.computational error per iter} and Lemma \ref{lemma.convergence of FPI} are satisfied, then we have 
\begin{equation}\label{eq.final result}
\lim_{k \to \infty}\| \hat{P}_{k} - P^* \|_F \leq \frac{\|C_s\|}{1-L_P}h.
\end{equation}
Here, $L_P$ is the Lipschitz constant in Lemma \ref{lemma.convergence of FPI} and $C_s$ is the constant vector defined in Theorem \ref{theorem.computational error per iter}.
\end{theorem}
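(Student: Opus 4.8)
The plan is to control $\|\hat{P}_k - P^*\|_F$ by inserting the exact, error-free Newton iterate $P_k$ and applying the triangle inequality,
\begin{equation}\nonumber
\|\hat{P}_k - P^*\|_F \leq \|\hat{P}_k - P_k\|_F + \|P_k - P^*\|_F,
\end{equation}
where $P_k$ is generated by the clean recursion $P_{k+1} = \Psi_{P_k}$ and $\hat{P}_k$ by the perturbed recursion \eqref{eq.Newton's method with error}, both started from the common initialization $P_0 = \hat{P}_0$. The second term is disposed of immediately: by Lemma \ref{lemma.Newton's method} the clean recursion is exactly Newton's method for the Riccati equation \eqref{eqs.ricatti}, so by Remark \ref{remark.Kleinman} it converges, $\lim_{k\to\infty} P_k = P^*$, and hence $\lim_{k\to\infty}\|P_k - P^*\|_F = 0$. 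The entire content of the theorem is therefore a uniform-in-$k$ bound on the first term.

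To obtain it, I would feed Lemma \ref{lemma.convergence of FPI} a bound $\Delta$ on $\|E_k\|_F$, where $E_k = \hat{P}_{k+1} - \Psi_{\hat{P}_k}$ is the perturbation in \eqref{eq.Newton's method with error}. The key step is to identify $E_k$ with the one-step solution error of Theorem \ref{theorem.computational error per iter}: starting from the same matrix $\hat{P}_k$, the ideal update $\Psi_{\hat{P}_k}$ is the $\mathrm{vec}(P)$ block of the exact solution $s_k$ of \eqref{eq.PI matrix expectation}, whereas the computed update $\hat{P}_{k+1}$ is the corresponding block of the Euler-Maruyama solution $\hat{s}_k$ of \eqref{eq.PI matrix expectation Euler}. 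Theorem \ref{theorem.computational error per iter} gives the element-wise bound $|s_k - \hat{s}_k| \leq C_s h$; passing to the Euclidean norm and discarding the remaining blocks yields
\begin{equation}\nonumber
\|E_k\|_F = \|\mathrm{vec}(P_k) - \mathrm{vec}(\hat{P}_{k+1})\| \leq \|s_k - \hat{s}_k\| \leq \|C_s\| h =: \Delta,
\end{equation}
a bound that is the same for every $k$.

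With this $\Delta$, the hypotheses of Lemma \ref{lemma.convergence of FPI} (assumed via the statement, in particular $P_0 = \hat{P}_0$ and $L_P < 1$) give $\|\hat{P}_k - P_k\|_F < \Delta/(1-L_P) = \|C_s\| h / (1-L_P)$ for all $k$. Substituting both estimates into the triangle inequality and letting $k \to \infty$ leaves only the first term, producing $\lim_{k\to\infty}\|\hat{P}_k - P^*\|_F \leq \|C_s\| h/(1-L_P)$, which is exactly \eqref{eq.final result}.

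I expect the principal obstacle to be the identification step rather than any estimate: one must argue carefully that the abstract Newton perturbation $E_k$ is the very same quantity whose size Theorem \ref{theorem.computational error per iter} controls, i.e.\ that solving \eqref{eq.PI matrix expectation Euler} from $\hat{P}_k$ realizes the operator $\Psi_{\hat{P}_k}$ plus precisely the computational error, and that the element-wise vector bound on the full $s_k$ (which also carries the $\mathrm{vec}(B^{\top}P_k)$ and $\mathrm{vec}(\Sigma_{P_k})$ blocks) legitimately reduces to the matrix Frobenius bound on the $P$-block alone. A secondary point is confirming that $\Delta$ is genuinely independent of $k$, since the factor $(\mathbb{E}[\hat{\Theta}_k])^{-1}$ entering $C_s$ varies along the iteration; this requires a uniform bound on $\|(\mathbb{E}[\hat{\Theta}_k])^{-1}\|$ over $k$, which the admissibility of every $K_k$ together with the rank condition makes plausible.
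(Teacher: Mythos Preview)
Your proposal is correct and follows essentially the same route as the paper: bound $\|E_k\|_F$ by $\|C_s\|h$ via Theorem~\ref{theorem.computational error per iter}, feed this $\Delta$ into Lemma~\ref{lemma.convergence of FPI} to obtain $\|P_k-\hat P_k\|_F<\|C_s\|h/(1-L_P)$, and then pass to the limit using $P_k\to P^*$. Your explicit triangle-inequality decomposition and the caveats you raise about the identification of $E_k$ and the $k$-uniformity of $C_s$ are, if anything, more careful than the paper's own treatment, which simply asserts these points.
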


\begin{proof}
According to Theorem \ref{theorem.computational error per iter}, we have $|s_k - \hat{s}_k | \leq C_s h$, where $C_s$ is the constant vector with all elements positive. For $P_k = \hat{P}_k$, we have $\|P_{k+1} - \hat{P}_{k+1} \|_F = \|\mathrm{vec}(P_{k+1}) - \mathrm{vec}{\hat{P}_{k+1}}\| \leq \|s_{k+1} - \hat{s}_{k+1} \| \leq \|C_s\| h.$ Thus, we obtain $\| E_k \|_F \leq \|C_s\| h := \Delta$. According to \eqref{eq.convergence error} in Lemma \ref{lemma.convergence of FPI}, we have 
\begin{equation}\label{eq.P_k - P_hat_k}
\| P_k - \hat{P}_k\|_F < \frac{\|C_s\| }{1-L_P}h.
\end{equation}
According to the convergence of PI without computational errors, we have $\lim_{k \to \infty} P_k = P^*$ as shown in Remark \ref{remark.Kleinman}. Taking the limit on both sides of \eqref{eq.P_k - P_hat_k}, we can obtain \eqref{eq.final result}.
\end{proof}

\begin{remark}
Theorem \ref{theorem.convergence of PI} demonstrates that under the conditions of Theorem \ref{theorem.computational error per iter} and Lemma \ref{lemma.convergence of FPI}, the convergence rate of ADP for stochastic LQR problems using the Euler-Maruyama method is $O(h)$. However, the Lipschitz bound condition in Lemma \ref{lemma.convergence of FPI} can sometimes be challenging to verify. In such cases, alternative forms of convergence analysis for Newton's method can be considered. These alternatives might impose less stringent conditions on the operator \(\Psi\) but limit the convergence results to a more local scope. For instance, the result in \cite{lancaster1966error} can be used to develop convergence analysis with different conditions, yet maintaining the same convergence rate of \(O(h)\).
\end{remark}

\begin{remark}
For the initial state $x(0)$ with zero mean and variance $X_0 = \mathbb{E}\{x(0) x(0)^\top\}$, the expected cost $J_E = \mathbb{E}\left\{\int_{0}^{\infty} (x^\top Qx + u^\top Ru) \rd t \right\} = \mathrm{Tr}(PX_0)$. Thus, the convergence rate $O(h)$ also applies to the expected cost $J_E$. This is why Figure \ref{fig.control performance} shows a straight line, and the intercept on the y-axis can be considered as the optimal expected cost $J_E^*$.
\end{remark}

\section{Numerical Results}\label{sec.results}
To validate the results in Theorem \ref{theorem.convergence of PI}, we consider human arm motion dynamics in a force field dependent on velocity \citep{bian2016adaptive,liu2007evidence}:   
\begin{equation}\nonumber
\begin{aligned}
\rd p = v \rd t,\;\;
m \rd v = (a - bv + f) \rd t,\;\;
\tau \rd a = (u-a) \rd t + \rd w,
\end{aligned} 
\end{equation}
where \( p = \begin{bmatrix}
p_x, & p_y    
\end{bmatrix}^{\top} \) represents the hand position, \( v = \begin{bmatrix}
v_x, & v_y    
\end{bmatrix}^{\top} \) denotes the hand velocity, and \( a = \begin{bmatrix}
a_x, & a_y    
\end{bmatrix}^{\top} \) indicates the acceleration. Additionally, \( u \) is the control input, \( m \) is the hand mass, \( b \) is the viscosity constant, \( \tau \) is the time constant, and \( \mathrm{d}w \) is the control-dependent noise satisfying
\begin{equation}\nonumber
\rd w = \begin{bmatrix}
c_1 & 0 \\ c_2 & 0    
\end{bmatrix}\begin{bmatrix}
u_x \\ u_y    
\end{bmatrix} \rd w_1 + \begin{bmatrix}
0 & c_2 \\ 0  & c_1    
\end{bmatrix}\begin{bmatrix}
u_x \\ u_y    
\end{bmatrix} \rd w_2.    
\end{equation}
Here, $\begin{bmatrix}
w_1, & w_2    
\end{bmatrix}$ are independent standard Wiener processes with noise magnitudes $c_1$ and $c_2$. The force $f$ is dependent on the velocity $v_x$ and $v_y$. Our simulation setup is consistent with that described by \cite{bian2016adaptive}, and the detailed parameter configurations can be found there due to space limit.

We plot the solution of PI, obtained by iteratively applying the operations in \eqref{eq.PI matrix expectation} and \eqref{eq.PIb}. The stochastic integrals in \eqref{eq.PI matrix expectation} are approximated using state samples collected at discrete times with varying sampling periods \(h\), via the Euler-Maruyama method. Figure \ref{fig.pnormandknorm} shows the Frobenius norm of the error in the resulting value matrix \(\hat{P}_{\infty}\) relative to the optimal value matrix \(P^*\), and the corresponding control gain matrix \(\hat{K}_{\infty}\) compared to the optimal control gain matrix \(K^*\).

\begin{figure}[htbp]
    \centering
    \begin{minipage}{0.49\textwidth}
        \includegraphics[width=\textwidth]{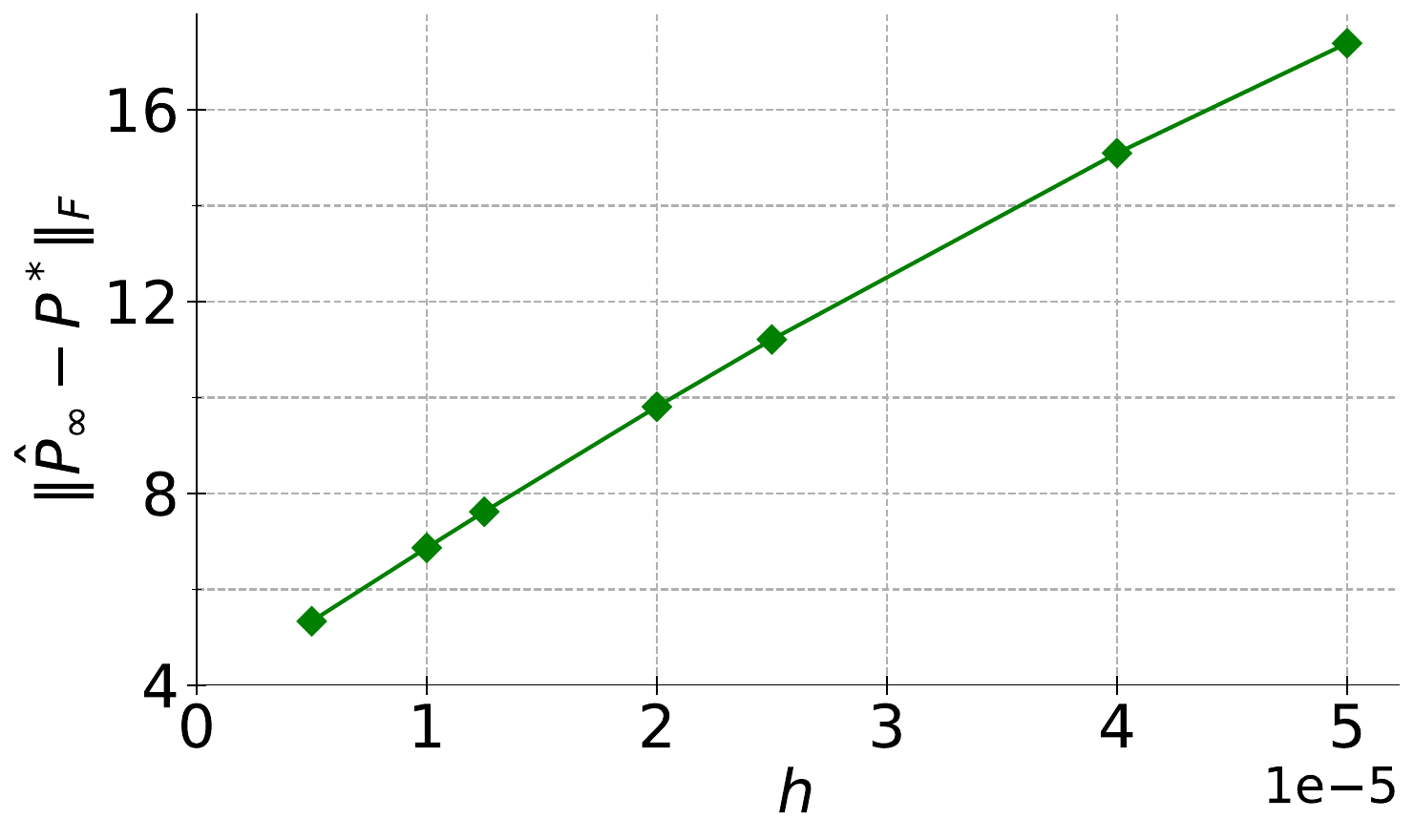}
\label{subfig:pnorm}
\vspace{-0.7cm}
    \end{minipage}
    \hfill
    \begin{minipage}{0.49\textwidth}
        \includegraphics[width=\textwidth]{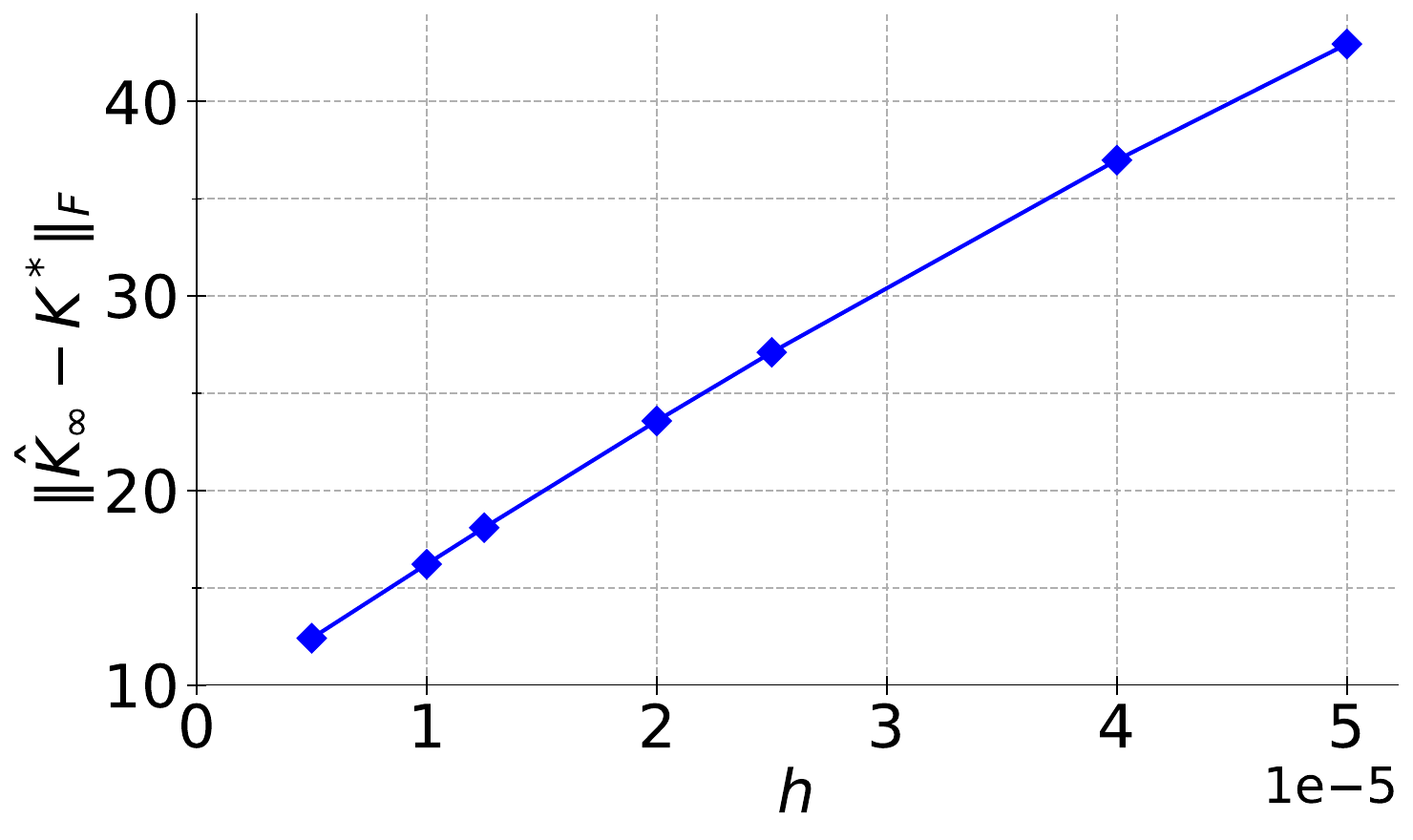}
\label{subfig:knorm}
\vspace{-0.7cm}
    \end{minipage}
\caption{Convergence of PI solutions with varying sampling periods \( h \). Left: $\|\hat{P}_{\infty} - P^*\|_F$ demonstrates a convergence rate of $O(h)$. Right: $\|\hat{K}_{\infty} - K^*\|_F$ demonstrates a convergence rate of $O(h)$.}
    \label{fig.pnormandknorm}
\end{figure}

In Figure \ref{fig.pnormandknorm}, we observe that as the sampling period \( h \) decreases, the error norms for the value matrix \(\hat{P}_{\infty}\) align with the expected convergence rate, consistent with the results of Theorem \ref{theorem.convergence of PI}. The slight deviation from a perfectly linear trend can be attributed to the fact that the theorem provides an upper bound for the error. Therefore, the actual error is smaller, resulting in a true convergence rate that is faster than \( O(h) \). 
Additionally, we are happy to find that although our theorem primarily discusses the convergence of the value matrix $\hat{P}$, our simulations have revealed that the convergence rate of the control gain matrix $\hat{K}$ also follows the pattern of $O(h)$.

\section{Conclusion and Discussion}\label{sec.conclusion}
In conclusion, our study underscores the sampling period's significant influence on ADP control performance in stochastic LQR problems. We identified that computational errors during PI crucially impact the algorithm's convergence and control policy learning. By drawing parallels between PI and Newton's method to solve the Ricatti equation, we showed that computational errors introduce additional error terms in each step of Newton's method, resulting in a convergence rate of \(O(h)\) where \(h\) represents the sampling period. 

A limitation of this work is that ADP algorithms for continuous-time stochastic systems remain largely unexplored and are generally confined to linear systems \citep{jiang2011approximate, jiang2014adaptive, bian2016adaptive, bian2018stochastic, bian2019continuous}. Consequently, our analysis is limited to linear systems, and extending it to nonlinear stochastic systems poses a considerable challenge. Besides, a technical limitation is that we assume the matrices $\Theta$ and $\hat{\Theta}$ 
are square. For future work, we will explore solutions for both under-determined and over-determined systems.
\newpage
\acks{We thank Dr. Yu Jiang from ClearMotion Inc and Prof. Zhong-Ping Jiang from New York University for sharing the code of \cite{bian2016adaptive} and for helpful discussions.}

\bibliography{ref}

\end{document}